\begin{document}



\newtheorem{theo}{ {\bf Theorem} }
\newtheorem{lemm}[theo]{ {\bf Lemma} }
\newtheorem{prop}[theo]{ {\bf Proposition} }
\newtheorem{cor}[theo]{ {\bf Corollary} }
\newtheorem{ques}[theo]{ {\bf Question} }
\newtheorem{rema}[theo]{ {\bf Remark} }
\newtheorem{defini}[theo]{ {\bf Definition} }


\newcommand{\NN}{{\mathbb N}}
\newcommand{\ZZ}{{\mathbb Z}}
\newcommand{\QQ}{{\mathbb Q}}
\newcommand{\RR}{{\mathbb R}}
\newcommand{\CC}{{\mathbb C}}
\newcommand{\HH}{{\mathbb H}}
\newcommand{\HHH}{{\cal H}}


\newcommand{\bs}[1]{{\boldsymbol{#1}}}
\newcommand{\vz}{\bs{z}}
\newcommand{\va}{\bs{\alpha}}
\newcommand{\vb}{\bs{\beta}}
\newcommand{\vg}{\bs{\gamma}}


\newcommand{\im}{\mathop{\mathrm{Im}}\nolimits}
\newcommand{\re}{\mathop{\mathrm{Re}}\nolimits}
\newcommand{\etp}{{\mathbf{e}}}

\newcommand{\PP}{{\mathrm P}_{\CC}}

\newcommand{\hol}[1]{\mathop{\mathcal{O}}\nolimits (#1)}
\newcommand{\mer}[1]{\mathop{\mathcal{M}}\nolimits (#1)}
\newcommand{\hog}[1]{\mathop{\mathcal{O}}_{#1}}


\title{On holomorphicity of Hartogs series \\
satisfying algebraic relations}
\author{%
Hiroki Aoki%
\thanks{aoki\_hiroki\_math@nifty.com,
Faculty of Science and Technology,
Tokyo University of Science.}
\and
Kyoji Saito%
\thanks{saito@kurims.kyoto-u.ac.jp,
Research Institute for Mathematical Sciences,
Kyoto University.}
}

\maketitle

\begin{abstract}
We consider a formal power series in one variable
whose coefficients are holomorphic functions
in a given multidimensional complex domain.
Assume the following two conditions on the series.\\
{\bf{(C1)}}
The restriction of the series at each point
of a dense subset of the domain
converges in an open disk of a fixed radius.\\
{\bf{(C2)}}
The series is algebraic over the ring of holomorphic functions
on the direct product space of the domain and the disk.\\
The main theorem of the present note is that
the series defines a holomorphic function on the direct product space.
We also give an example where
the condition {\bf{(C2)}} is essentially necessary.
\end{abstract}

\section{Introduction and Main Theorem}

In the present note,
for a multidimensional complex domain \( U \) and \( \bs{u} \in U \), \(
\hol{U} \) denotes
the ring of holomorphic functions on \( U \) and \(
\hog{\bs{u}} \) denotes
the ring of germs of holomorphic functions at \( \bs{u} \).
We write the open disk of a radius \( r \) centered at the origin
as \( D(r):= \{ \; w \in \CC \; | \; |w|<r \; \} \).
Let \( D_1 \subset \CC^N \) be a domain.
Let \( R \) be a positive real number or \( + \infty \) and \( D_2 := D(R) \).
We write \( D:= D_1 \times D_2 \).

The objective of the present note is a formal power series,
which we call Hartogs series,
\begin{equation}
\label{eq:hs}
 F( \vz ,w):= \sum_{n=0}^{\infty} f_n ( \vz ) w^n \in \hol{D_1}[[w]]
 ,
\end{equation}
where \( f_n \in \hol{D_1} \).
In the present note,
we regard \( \hol{D} \) as a subring of \( \hol{D_1} [[w]] \) by
the power series expansion with respect to \( w \).
Thus, we have a question: ``Find a good condition for \( F \)
given by {\rm{({\ref{eq:hs}})}} to belong to \( \hol{D} \).''
It is known that
``If \( F|_{ \vz = \va } \in \hol{D_2} \) for all \( \va \in D_1 \),
then there exists a nowhere dense closed set \( A \in D_1 \) such
that \( F \in \hol{(D_1 \setminus A) \times D_2 } \)''
is true (\cite{TT1}*{Proposition 1}), however,
it may occur the case \( F \not\in \hol{D} \) even under this condition.
Several additional conditions
which guarantee that \( F \in \hol{D} \) were given
(cf.~\cite{TT1}).

In the present note, we consider this question
from the view point of algebraicity of \( F \) over \( \hol{D} \).
Our motivation is from the theory of automorphic forms.
Some kinds of automorphic forms of several variables have
Fourier-Jacobi series expansion.
Then, the converse: ``Whether a given formal Fourier-Jacobi series
with formal automorphicity converges or not?'' is
an interesting problem.
There are many researches about this.
First, in 2000, the above converse problem
was shown to be true by the first author in the case of
Siegel modular forms of degree \( 2 \) with
respect to the full modular group
(cf.~\cite{Ao1}).
Since then,
this problem has been shown to be true in various cases.
For example, Ibukiyama, Poor and Yuen extended
the above author's result to Siegel paramodular forms
of degree \( 2 \) with level \( \leqq 4 \) (cf.~\cite{IPY}).
Bruinier and Raum proved a modularity of symmetric formal
Fourier-Jacobi series in arbitrary degree
under a suitable assumptions (cf.~\cite{BR1}).
Very recently, the first author, Ibukiyama and Poor
proved a modularity of symmetric formal
Fourier-Jacobi series for Siegel paramodular forms
of degree \( 2 \) with arbitrary level (cf.~\cite{AIP}).
This result implies that
the algebraicity is important to show the convergence.
The aim of the present note is to prepare a general theorem
which can be applied to the above problem.

\bigskip

Here we consider the following two conditions:
\begin{description}
\item[(C1)]
There exists a dense subset \( S \) of \( D_1 \) such
that \( F |_{ \vz = \va } \) converges
and \( F |_{ \vz = \va } \in \hol{D_2} \) for all \( \va \in S \).
\item[(C2)]
\( F \) is algebraic over \( \hol{D} \).
Namely, there exists a nontrivial polynomial
\begin{equation}
\label{eq:af}
 \Phi ( \vz ,w,X):= \sum_{j=0}^t \Phi_j ( \vz ,w) X^{t-j} \in \hol{D} [X]
 \quad \bigl( \; t \geqq 1, \; \Phi_0 \not\equiv 0 \; \bigr)
\end{equation}
such that \( \Phi ( \vz ,w,F( \vz ,w)) \equiv 0 \) holds as an equality
on \( \hol{D_1} [[w]] \).

\end{description}
The main theorem of the present note is as follows:

\begin{theo}
\label{theo:main}
Let \( F \) be
a Hartogs series given by {\rm{(\ref{eq:hs})}} and
the conditions {\bf{(C1)}} and {\bf{(C2)}} hold.
Then \( F \) converges in \( D \) and \( F \in \hol{D} \).
\end{theo}

\begin{rema}
\label{rema:main}
The condition {\bf{(C2)}} is essentially necessary
in {\bf{Theorem} \ref{theo:main}}.
\end{rema}

\begin{rema}
The condition {\bf{(C2)}} alone already gives
some weaker holomorphicity result.
For every point \( ( \vz_0 ,0) \in D \),
the formal series defines an element \( F_{\vz_0} \) of
the completion \( \hat{A} = \CC [[ \vz - \vz_0 ,w]] \) of
the local ring \( A = \CC \{ \vz - \vz_0 ,w \} \) of \(
D \) at \( ( \vz_0 ,0) \).
As a ring of convergent power series with complex coefficients,
the ring \( A \) is a local integral domain
which is henselian and excellent.
By \cite{BR1}*{Proposition 4.2},
which is essentially a result from commutative algebra,
the ring \( A \) is algebraically closed in \( \hat{A} \).
By the condition {\bf{(C2)}} of the present note,
the formal series \( F_{\vz_0} \) is algebraic over \( A \).
This implies that \( F_{\vz_0} \) is already contained in \( A \) and
therefore convergent and holomorphic
in a neighborhood of \( ( \vz_0 ,0) \).
Therefore \( F \) is holomorphic
in a neighborhood of \( D_1 \times \{ 0 \} \).
>From this point of view,
the condition {\bf{(C1)}} is needed to be able to
holomorphically continue \( F \) to all of \( \hol{D} \).
\end{rema}

We give a proof of {\bf{Theorem} \ref{theo:main}} in the next section
and then we give an explicit example
which provides {\bf{Remark} \ref{rema:main}} in the last section.
Regarding the basics of function theory of several complex variables,
we referred to some standard textbooks (e.g. \cite{Nis}).

\section{Proof of the main theorem}

In this section,
we show {\bf{Theorem} \ref{theo:main}} by four steps.
Let \( p_1 : D \to D_1 \) be the projection \( ( \vz ,w) \mapsto \vz \).

\bigskip

\noindent
\underline{\bf{Step 1.}}

\bigskip

First, we show the following lemma.
\begin{lemm}
\label{lemm:rmpt}
Let \( F \) be a Hartogs series given by {\rm{({\ref{eq:hs}})}}.
Let \( U \subset D_1 \) be a domain and
let \( E \) be a nontrivial analytic subset of \( U \).
If \( F \in \hol{(U \setminus E ) \times D_2} \),
then \( F \in \hol{U \times D_2} \).
\end{lemm}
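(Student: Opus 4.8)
The plan is to reduce everything to the growth of the coefficients $f_n$, which are already holomorphic on all of $U$, and to control them through plurisubharmonic (psh) radius-of-convergence estimates. For $n\ge 1$ put $v_n:=\tfrac1n\log|f_n|$, a psh function on all of $U$ (since $f_n\in\hol{U}$). Because $F$ is holomorphic on $(U\setminus E)\times D_2$, Cauchy's estimates in $w$ on circles $|w|=r$ with $r<R$ give, locally uniformly on $U\setminus E$, bounds of the form $|f_n(\vz)|\le M\,r^{-n}$, hence $\limsup_n v_n(\vz)\le-\log r$; letting $r\uparrow R$ yields $\limsup_n v_n\le-\log R$ on the dense open set $U\setminus E$, and the family $\{v_n\}$ is locally uniformly bounded above there.

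The first substantial point is to propagate the bound $\limsup_n v_n\le-\log R$ across $E$. Here I would restrict to a generic one-dimensional disc $\Delta\subset U$ meeting $E$ in isolated points; on such a slice each $v_n$ is subharmonic on all of $\Delta$, so the sub-mean-value inequality over a small circle $|\zeta|=\epsilon$ centred at a point $\va\in E\cap\Delta$ gives $v_n(\va)\le\tfrac1{2\pi}\int_0^{2\pi}v_n(\va+\epsilon e^{i\theta})\,d\theta$. That circle lies in $U\setminus E$, where the $v_n$ are bounded above by a single constant, so the reverse Fatou lemma applies and yields $\limsup_n v_n(\va)\le\tfrac1{2\pi}\int_0^{2\pi}\limsup_n v_n(\va+\epsilon e^{i\theta})\,d\theta\le-\log R$. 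Varying the slices shows $\limsup_n v_n\le-\log R$ on a dense subset of $E$; in particular $\sup_n v_n<\infty$ there, so the series converges with radius $\ge R$ at these points.

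The main obstacle is to upgrade these pointwise bounds to a local uniform bound for $\{v_n\}$ in a full neighbourhood of $E$, since only then can one conclude normal (not merely pointwise) convergence. This is exactly the content of Hartogs' lemma on families of psh functions: once $\{v_n\}$ is locally uniformly bounded above and $\limsup_n v_n\le-\log R$ on a set of full measure, the upper-semicontinuous regularization $u^*:=(\limsup_n v_n)^*$ is psh on $U$, and the sub-mean-value inequality forces $u^*\le-\log R$ everywhere, because $U\setminus E$ has full measure. I would obtain the initial local uniform bound by a Baire-category argument on $U$ minus the (nowhere dense, non-generic) locus of $E$, where $\sup_n v_n<\infty$ pointwise has just been established, and then spread it using the psh machinery. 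This upper-boundedness step is the delicate heart of the argument, and I expect to lean on the standard Hartogs-type convergence theorems for such series (cf.~\cite{Nis}).

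Granting $u^*\le-\log R$ on $U$ together with the local uniform bound, the series $\sum_n f_n(\vz)w^n$ converges normally on $U\times D(r)$ for every $r<R$ and so defines a function $\tilde F\in\hol{U\times D_2}$. It remains to identify $\tilde F$ with $F$: writing the $w$-Taylor expansion $\tilde F=\sum_n\tilde f_n(\vz)w^n$ with $\tilde f_n\in\hol{U}$ (Cauchy integrals in $w$), we have $\tilde f_n=f_n$ on the nonempty open set $U\setminus E$, hence $\tilde f_n\equiv f_n$ on the domain $U$ by the identity theorem. Therefore $F=\tilde F\in\hol{U\times D_2}$, as claimed. Alternatively, once the local uniform bound $|f_n|\le C^n$ near $E$ is in hand one gets $|F|\le\sum_n C^n|w|^n<\infty$ locally near $E\times D_2$, so $F$ is locally bounded and Riemann's removable-singularity theorem across the analytic set $E\times D_2$ furnishes the extension directly.
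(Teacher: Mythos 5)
Your overall strategy (Cauchy estimates on the coefficients, propagation of the bound across \( E \), then Cauchy--Hadamard) is the same as the paper's, but the step you yourself call ``the delicate heart'' --- upgrading the pointwise bounds on \( E \) to a \emph{local uniform} upper bound for \( \{ v_n \} \) in a full neighbourhood of \( E \) --- is genuinely missing, and the two tools you name cannot supply it. Hartogs' lemma on families of plurisubharmonic functions has local uniform boundedness above as a \emph{hypothesis}, so invoking it to produce that boundedness is circular. The Baire-category argument gives a uniform bound on \emph{some} open set with no control on its location; since you already have uniform bounds on open subsets of \( U \setminus E \), this yields nothing new, and uniform bounds do not ``spread'' for psh families (e.g.\ \( v_n ( \vz ) = n \re z_1 \) is uniformly bounded above on a half-space and pointwise unbounded on the other side of its boundary). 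Deferring the crux to unspecified ``standard Hartogs-type convergence theorems'' leaves the proof incomplete, and the pointwise information you did establish is provably insufficient by itself: as recalled in the paper's introduction, a Hartogs series can converge on all of \( D_2 \) at \emph{every} point of \( D_1 \) and still fail to be holomorphic on the product.

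The irony is that your slicing construction already contains the correct argument; you merely apply it at the wrong set of points. Fix \( \va \in E \) (codimension \( 1 \), \( E = \{ g=0 \} \) locally) and choose the coordinate \( z_1 \) so that \( g( z_1 , \alpha_2 , \dots , \alpha_N ) \not\equiv 0 \); after shrinking, the boundary circles \( |z_1 - \alpha_1 | = \varepsilon_1 \) of the \( z_1 \)-discs centred at \emph{all} points \( \vz \) of a small polydisc around \( \va \) (not just at points of \( E \)) stay inside one fixed compact subset \( K \subset U \setminus E \). Applying your sub-mean-value inequality (or simply the one-variable maximum principle to \( f_n \), as the paper does) on each such disc transfers the Cauchy bound \( |f_n| \leqq M_r / r^n \), valid on \( K \), to the whole polydisc, uniformly in \( n \); Cauchy--Hadamard then finishes the proof. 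The essential point your proposal lacks is precisely this uniform geometric control: the slicing direction must be chosen transverse to \( E \) (not ``generically''), with the circles' position independent of the centre, so that a single compact set \( K \) serves every point of a neighbourhood of \( \va \).
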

\begin{proof}
It is enough to show the case that \( E \) is
an analytic set of \( U \) with codimension \( 1 \).
Let \( \va = ( \alpha_s )_{s=1}^N \in E \) and
we take a neighborhood \( U_{\va} \) of \( \va \) in \( U \) such
that \( E \) is an analytic hypersurface in \( U_{\va} \).
By a suitable linear coordinate transformation if necessary,
we can write
\[
 E \cap U_{\va} =
 \{ \; \vz =( z_s )_{s=1}^N \in U_{\va} \; | \; g( \vz )=0 \; \}
\]
where \( g \in \hol{ U_{\va}} \) satisfies
the conditions \( g( \va )=0 \) and \( g( z_1, \alpha_2 , \dots , \alpha_N)
\not\equiv 0 \).
We take \( \varepsilon >0 \) and \(
\varepsilon_1 > \varepsilon_2 >0 \) such that \(
\overline{ U_1 } \subset U_{\va} \) and \(
E \cap \overline{ U_1 } = E \cap U_2^* \), where
\begin{align*}
 U_1
 & :=
 \{ \; \vz =( z_s )_{s=1}^N \in D_1 \; | \;
 | z_1 - \alpha_1 |< \varepsilon_1 , \;
 | z_j - \alpha_j |< \varepsilon \; (j=2, \dots ,N) \; \}
 \intertext{and}
 U_2^*
 & :=
 \{ \; \vz =( z_s )_{s=1}^N \in D_1 \; | \;
 | z_1 - \alpha_1 |< \varepsilon_2 , \;
 | z_j - \alpha_j | \leqq \varepsilon \; (j=2, \dots ,N) \; \}
 .
\end{align*}
Here we remark that \( \overline{ U_1 } \setminus U_2^* \) is a compact set
and \( E \cap ( \overline{ U_1 } \setminus U_2^*)= \emptyset \).
For any \( 0<r<R \),
let \( M_r := \max \{ \; |F( \vz ,w)| \; | \;
\vz \in \overline{ U_1 } \setminus U_2^* , \; w \in \overline{D(r)} \; \} \).
Then, by using Cauchy's inequality,
we have \( |f_n ( \vz )| \leqq M_r / r^n \) for
any \( \vz \in \overline{ U_1 } \setminus U_2^* \).
Thus, by using maximum principle, \( |f_n ( \vz )| \leqq M_r / r^n \) holds
for any \( \vz \in \overline{ U_1 } \).
Therefore, by Cauchy-Hadamard theorem,
we have \( F \in \hol{ U_1 \times D(r) } \) and
thus we have \( F \in \hol{U \times D_2} \).
\end{proof}

\bigskip

\noindent
\underline{\bf{Step 2.}}

\bigskip

Now,
let \( F \) be a Hartogs series given by {\rm{(\ref{eq:hs})}} and
the conditions {\bf{(C1)}} and {\bf{(C2)}} hold.
Let \( F^* ( \vz ,w) := \Phi_0 ( \vz ,w) F( \vz,w)
\in \hol{D_1} [[w]] \) and
\[
 \Phi^* ( \vz ,w,X):= X^t + \sum_{j=1}^t \Phi_j^* ( \vz ,w) X^{t-j}
 \in \hol{D} [X]
 ,
\]
where \( \Phi_j^* ( \vz ,w) = \Phi_j ( \vz ,w) \Phi_0 ( \vz ,w)^{j-1}
\in \hol{D} \).
Since \( \Phi ( \vz ,w,F( \vz ,w)) \equiv 0 \),
we have \( \Phi^* ( \vz ,w,F^* ( \vz ,w)) \equiv 0 \).
\begin{lemm}
\label{lemm:mkmc}
Let \( F \) and \( F^* \) be as above.
If \( F^* \in \hol{D} \), we have \( F \in \hol{D} \).
\end{lemm}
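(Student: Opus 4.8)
The plan is to reduce the holomorphicity of $F$ to that of the genuine meromorphic quotient $\tilde F := F^*/\Phi_0$ and then to remove its apparent poles using the Hartogs structure together with {\bf (C1)}. First I would introduce the proper analytic subset $A := \{ \vz \in D_1 \mid \Phi_0(\vz, w) = 0 \text{ for all } w \}$, which is nowhere dense because $\Phi_0 \not\equiv 0$; the whole argument takes place over $D_1 \setminus A$, on which each slice $\Phi_0(\vz, \cdot)$ is a nonzero holomorphic function of $w$. Since $\Phi_0 F = F^*$ holds in $\hol{D_1}[[w]]$, restricting the coefficients to a point $\va \in D_1 \setminus A$ gives the identity $\Phi_0(\va, w) F(\va, w) = F^*(\va, w)$ in $\CC[[w]]$. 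For $\va \in S$ the condition {\bf (C1)} makes $F(\va, \cdot)$ an honest element of $\hol{D_2}$, so this identity holds between holomorphic functions on $D_2$; consequently $F(\va, \cdot)$ is the holomorphic extension of $\tilde F(\va, \cdot) = F^*(\va, \cdot)/\Phi_0(\va, \cdot)$ across the isolated zeros of $\Phi_0(\va, \cdot)$. Because $A$ is nowhere dense and $S$ is dense, $S \setminus A$ is dense in $D_1$.

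The heart of the proof is a local removal of poles of $\tilde F$ at a point $(\vz_0, w_0) \in (D_1 \setminus A) \times D_2$ with $\Phi_0(\vz_0, w_0) = 0$. There $\Phi_0(\vz_0, \cdot)$ vanishes to some finite order $d \geq 1$ in $w$, so the Weierstrass preparation theorem (with $w$ as distinguished variable) lets me write $\Phi_0 = e \cdot P$ near $(\vz_0, w_0)$ with $e$ a unit and $P$ a Weierstrass polynomial of degree $d$ in $(w - w_0)$. Setting $G := F^*/e$, which is holomorphic near $(\vz_0, w_0)$, Weierstrass division produces $G = qP + \rho$ with $q$ holomorphic and $\deg_w \rho < d$, so that $\tilde F = q + \rho/P$. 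I would then argue that $\rho \equiv 0$: for $\va \in S \setminus A$ near $\vz_0$ the function $\tilde F(\va, \cdot)$ is holomorphic near $w_0$ by the previous paragraph, forcing $\rho(\va, \cdot)$ to vanish at all $d$ roots (with multiplicity) of $P(\va, \cdot)$, which is impossible for a nonzero polynomial of $w$-degree $< d$; hence each coefficient $\rho_j(\va)$ vanishes on the dense set $S \setminus A$, and the identity theorem gives $\rho \equiv 0$. Therefore $\tilde F = q$ is holomorphic near $(\vz_0, w_0)$, and since such points are the only candidates for singularities, $\tilde F \in \hol{(D_1 \setminus A) \times D_2}$.

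To finish, I would identify $\tilde F$ with $F$ over $D_1 \setminus A$: for each $\vz_0 \in D_1 \setminus A$ the formal relation above shows that the $w$-Taylor expansion of the now-holomorphic $\tilde F(\vz_0, \cdot)$ at $w = 0$ is exactly $\sum_n f_n(\vz_0) w^n$, so the holomorphic $w$-coefficients of $\tilde F$ coincide with the $f_n$ on $D_1 \setminus A$. Thus $F = \tilde F \in \hol{(D_1 \setminus A) \times D_2}$, and applying {\bf Lemma \ref{lemm:rmpt}} with $U = D_1$ and $E = A$ upgrades this to $F \in \hol{D_1 \times D_2} = \hol{D}$, as claimed.

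I expect the main obstacle to be the local pole-removal step, specifically making precise that holomorphicity of the slices $\tilde F(\va, \cdot)$ for $\va$ in a dense set forces the Weierstrass remainder $\rho$ to vanish identically, and, before that, the careful verification that the formal Hartogs series $F$ really does represent the meromorphic function $\tilde F = F^*/\Phi_0$ near $w = 0$ on each admissible slice, so that the object shown to be holomorphic is genuinely $F$ rather than merely an abstract quotient.
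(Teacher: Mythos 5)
Your proposal is correct, and it shares the paper's overall skeleton --- introduce the exceptional set \( A = E_{\Phi_0} := \{\,\va \in D_1 \mid \Phi_0|_{\vz=\va}\equiv 0\,\} \), prove holomorphy of \( F \) on \( (D_1\setminus A)\times D_2 \), and finish by Lemma \ref{lemm:rmpt} --- but the core step is executed by a genuinely different technique. The paper argues by contradiction inside the theory of meromorphic functions: it regards \( F \equiv F^*/\Phi_0 \in \mer{D} \), assumes a singular point exists in \( D' = D\setminus p_1^{-1}(E_{\Phi_0}) \), writes \( F = P/Q \) locally with \( P,Q \) coprime in each local ring, and uses that \( p_1 \) restricted to the polar set \( T_Q \) is locally finite, hence open, so that \( p_1(T_Q\setminus T^*) \) is a nonempty open set which must meet the dense set \( S \); a point \( \vg\in S \) lying under a genuine pole of the slice then contradicts \textbf{(C1)}. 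You instead remove the singularities directly: Weierstrass preparation at a zero of \( \Phi_0 \), Weierstrass division \( F^*/e = qP+\rho \) with \( \deg_w\rho < d \), and the observation that for a dense set of \( \va \) (namely \( S\setminus A \), where \textbf{(C1)} makes the slice of \( F \) an honest element of \( \hol{D_2} \) extending \( F^*/\Phi_0 \)) the polynomial \( \rho(\va,\cdot) \) vanishes at all \( d \) roots of \( P(\va,\cdot) \), forcing \( \rho(\va,\cdot)\equiv 0 \) and then \( \rho\equiv 0 \) by continuity; so \( F^*/\Phi_0 = q \) is holomorphic there. Your route avoids the structure theory the paper invokes (codimension of polar and indeterminacy sets, local coprime representations, openness of locally finite projections) and is constructive rather than by contradiction, at the cost of more bookkeeping: the root-counting normalization of the Weierstrass data, the gluing of the local extensions, and the explicit cancellation argument in \( \CC[[w]] \) identifying the Taylor expansion of the extended function with \( \sum_n f_n(\vz_0)w^n \) --- a point the paper passes over implicitly via the formal identity \( QF\equiv P \), and which you rightly flag as necessary so that the object proved holomorphic really is \( F \). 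Both arguments use \textbf{(C1)} through density in exactly the same way, and both reduce \( t=1 \) of the theorem to Lemma \ref{lemm:rmpt} at the end.
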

\begin{proof}
Since \( F^* \in \hol{D} \),
we have \( F \equiv F^* / \Phi_0 \in \mer{D} \).
Let \( E_{ \Phi_0 } :=
\{ \; \va \in D_1 \; | \; \Phi_0 |_{ \vz= \va } \equiv 0 \} \).
We remark that \( E_{ \Phi_0 } \) is
a nontrivial analytic subset of \( D_1 \).
First we show that \( F \in \hol{D^{\prime}} \),
where \( D^{\prime} := D \setminus p_1^{-1} ( E_{ \Phi_0 } ) \).
Let \( ( \vb , c) \in D^{\prime} \) be
a singular point of \( F \),
namely, \( ( \vb , c) \) is a pole or an indeterminacy.
We take \( V \subset D^{\prime} \),
which is an open neighborhood of \( ( \vb , c) \) such
that \( F \equiv P/Q , \; P,Q \in \hol{V} \) and \( P \) and \( Q \) are
coprime in any \( \hog{( \vz ,w)} \; (( \vz ,w) \in V) \).
Let \( T_Q := \{ \; ( \vz ,w) \in V \; | \; Q( \vz ,w)=0 \; \} \) and \(
T^* := \{ \; ( \vz ,w) \in T_Q \; | \; P( \vz ,w)=0 \; \} \).
Namely, \( T_Q \) and \( T^* \) are the polar set and
the indeterminacy set of \( F \) in \( V \), respectively.
By general theory, \( T_Q \) and \( T^* \) are
analytic subsets of \( V \) with
codimension \( 1 \) and \( 2 \), respectively.
Because the map \( p_1 |_{T_Q} \) is locally finite,
it is an open map and
hence \( p_1 ( T_Q \setminus T^* ) \) is a non-empty
open subset of \( D_1 \).
Thus there exists \( \vg \in S \cap p_1 ( T_Q \setminus T^* ) \).
On the equality \( QF \equiv P \) on \( \hol{D_1} [[w]] \),
there exists \( d \in D_2 \) such that \( F \) does not
converge at \( ( \vg ,d) \in D \), because \( 
\vg \in p_1 ( T_Q \setminus T^* ) \).
It contradicts with \( \vg \in S \).
Therefore we have \( F \in \hol{D^{\prime}} \).
Consequently, we have \( F \in \hol{D} \) by {\bf{Lemma \ref{lemm:rmpt}}}.
\end{proof}
>From the above, when \( t=1 \), {\bf{Theorem} \ref{theo:main}} has
been proven.

\bigskip

\noindent
\underline{\bf{Step 3.}}

\bigskip

Continuing from the previous step,
let \( F \) be a Hartogs series given by {\rm{(\ref{eq:hs})}} and
the conditions {\bf{(C1)}} and {\bf{(C2)}} hold.
We may assume \( t \geqq 2 \).
In this step,
we assume \( \Phi_0 ( \vz ,w) \equiv 1 \).
Let \( \Delta ( \vz ,w) \in \hol{D} \) be
the discriminant of \( \Phi \in \hol{D} [X] \) and \(
T_{\Delta} := \{ \; ( \vz ,w) \in D \; | \; \Delta ( \vz ,w)=0 \; \} \).
\begin{lemm}
\label{lemm:acnt}
Let \( F, \Phi , \Delta \) and \( T_{\Delta} \) be as above.
If \( D_1 \) is simply connected and
if there exist \( f_1, f_2 , \dots , f_m \in \hol{D_1} \) such
that \( T_{\Delta} = \bigsqcup_{k=1}^m \{ \; ( \vz ,w) \in D \; | \;
w= f_k ( \vz ) \; \} \) (disjoint union), then \( F \in \hol{D} \).
\end{lemm}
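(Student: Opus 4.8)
The plan is to realize \( F \) as a single-valued holomorphic branch of the roots of \( \Phi \) over \( D \setminus T_{\Delta} \) and then to remove \( T_{\Delta} \). Since the hypothesis that \( T_{\Delta} \) is a disjoint union of finitely many graphs forces \( \Delta \not\equiv 0 \), I may assume \( \Phi \) is squarefree, so that over \( B := D \setminus T_{\Delta} \) the monic polynomial \( \Phi ( \vz ,w,X) \) has \( t \) distinct roots and \( \pi \colon Y \to B \), with \( Y := \{ \, ( \vz ,w,x) \mid \Phi ( \vz ,w,x)=0 ,\ ( \vz ,w) \in B \, \} \) and \( \pi ( \vz ,w,x)=( \vz ,w) \), is an unramified \( t \)-sheeted covering. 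Recall that condition {\bf{(C2)}} already forces \( F \) to be holomorphic on a neighborhood \( W \) of \( D_1 \times \{ 0 \} \) (the Remark above). Shrinking \( W \) so that \( W' := W \setminus T_{\Delta} \) is a connected open set, the graph of \( F \) defines a section \( \sigma \colon W' \to Y \) whose image lies in a single connected component \( Y_0 \) of \( Y \). If I can show that \( \pi |_{Y_0} \colon Y_0 \to B \) is one-sheeted, then \( G := x \circ ( \pi |_{Y_0})^{-1} \in \hol{B} \) is a global holomorphic root of \( \Phi \) extending \( \sigma \).

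The crux is to prove that \( \pi |_{Y_0} \) is injective, i.e.\ that the monodromy of the covering fixes the sheet \( Y_0 \). Using that the \( f_k \) are holomorphic with pairwise disjoint graphs, the projection \( p_1 \colon B \to D_1 \) is a locally trivial fibre bundle whose fibre over \( \vz \) is the \( m \)-punctured disk \( D(R) \setminus \{ f_1 ( \vz ), \dots , f_m ( \vz ) \} \). Since \( D_1 \) is simply connected, the homotopy exact sequence of this fibration shows that \( \pi_1 (B) \) is generated by the images of loops contained in a single fibre, namely small loops \( \lambda_1 , \dots , \lambda_m \) encircling the punctures. I would choose the base point \( ( \va_* , w_0 ) \in W' \) with \( \va_* \in S \), which is possible because \( S \) is dense in \( D_1 \) and \( W' \) contains points with \( w \) arbitrarily close to \( 0 \). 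By condition {\bf{(C1)}}, \( F |_{ \vz = \va_* } \) is single-valued and holomorphic on all of \( D(R)=D_2 \), and the identity \( \Phi ( \vz ,w,F) \equiv 0 \) shows it is a root of \( \Phi ( \va_* ,w,X) \) there. Hence analytic continuation of this branch around each \( \lambda_k \) (a loop lying in the slice \( \{ \va_* \} \times D_2 \)) returns to \( F |_{ \vz = \va_* } \), so every generator of \( \pi_1 (B) \) fixes the sheet \( Y_0 \). Therefore \( \pi |_{Y_0} \) is a homeomorphism and \( G \) exists as above.

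It remains to identify \( G \) with \( F \) and to cross \( T_{\Delta} \). By construction \( G = \sigma = F \) on the open set \( W' \); since \( T_{\Delta} \) is thin, \( W' \) is dense in \( W \), so \( G=F \) on \( W \), and comparing \( w \)-expansions gives \( G=F \) in \( \hol{D_1}[[w]] \). Finally, because \( G \) is a root of the monic polynomial \( \Phi \), the elementary bound on the roots of \( X^t + \sum_{j=1}^t \Phi_j X^{t-j} \) in terms of the locally bounded coefficients \( \Phi_j \) shows that \( G \) is locally bounded near every point of the analytic hypersurface \( T_{\Delta} \). By Riemann's second extension theorem \( G \) extends holomorphically across \( T_{\Delta} \) to all of \( D \). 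Thus \( F=G \in \hol{D} \).

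The main obstacle I expect is the monodromy step: one must extract the fibre-bundle structure of \( p_1 \colon B \to D_1 \) from the disjoint-graphs hypothesis and use simple connectivity of \( D_1 \) to reduce \( \pi_1 (B) \) to the fibre loops, and then transport the single-valuedness of the branch — which condition {\bf{(C1)}} supplies only over the dense set \( S \) — to a base point in \( S \), so as to kill all these generators simultaneously. The identification \( G=F \) and the extension across \( T_{\Delta} \) are then routine.
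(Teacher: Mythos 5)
Your proof is correct, and its topological core coincides with the paper's: both arguments use that, since \( D_1 \) is simply connected and \( T_{\Delta} \) is a disjoint union of graphs, \( \pi_1 ( D \setminus T_{\Delta} ) \) is carried by loops in a single fibre \( \{ \va_* \} \times D_2 \), and both kill the monodromy by noting that condition {\bf{(C1)}} makes \( F |_{ \vz = \va_* } \) a single-valued global root of \( \Phi \) on that fibre for \( \va_* \in S \). Where you genuinely diverge is in how the resulting global branch is identified with \( F \). The paper can only match its branch with \( F \) along one slice \( \vz = \vb \), which does not identify the formal series \( F \) with the branch; so it extends the branch \( \widetilde{G} \) to \( \hol{D} \) (continuous extension plus Rad\'o's theorem), factors \( \Phi = (X - \widetilde{G}) \Psi \) in \( \hol{D}[X] \), and argues in the integral domain \( \hol{D_1}[[w]] \) that either \( F \equiv \widetilde{G} \) or \( \Psi ( \vz ,w,F) \equiv 0 \), closing by induction on \( t \). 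You instead invoke the paper's Remark (the Bruinier--Raum algebraic-closedness result from commutative algebra) to know that \( F \) is already holomorphic on a neighbourhood \( W \) of \( D_1 \times \{ 0 \} \); the graph of \( F \) over \( W \setminus T_{\Delta} \) then selects a component \( Y_0 \) of the root covering, and trivial monodromy makes \( Y_0 \to D \setminus T_{\Delta} \) one-sheeted, identifying \( F \) with the global branch outright --- no factorization, no case distinction, no induction. You also cross \( T_{\Delta} \) by local boundedness of roots plus Riemann's extension theorem, where the paper uses continuity plus Rad\'o; both are standard. The trade-off: your route is logically more direct and avoids the step the paper leaves implicit in its induction (that the disjoint-graphs hypothesis persists for the discriminant of \( \Psi \)), at the price of importing the heavier commutative-algebra input, which the paper's own proof of this lemma keeps out even though the paper records it as available. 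One small repair to your write-up: extend \( G \) across \( T_{\Delta} \) \emph{before} comparing \( w \)-expansions with \( F \), since \( T_{\Delta} \) may meet \( D_1 \times \{ 0 \} \) and the expansion of \( G \) along \( w=0 \) need not exist beforehand; once extended, \( G=F \) on the dense subset \( W \setminus T_{\Delta} \) of \( W \) gives \( G=F \) on \( W \), hence \( F \in \hol{D} \).
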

\begin{proof}
We choose a point \( ( \vb ,c) \in (S \times D_2) \setminus T_{\Delta} \).
Since \( \Delta ( \vb ,c) \neq 0 \), we can choose
a unique branch \( G \in \hog{( \vb ,c)} \) of the algebraic function
defined by \( \Phi \) which
takes the same value as \( F \) at \( ( \vb ,c) \).
By the assumption of this lemma,
the fundamental group \( \pi_1 ( D \setminus T_{\Delta} , ( \vb ,c) ) \)
is isomorphic to \( \pi_1 ( D_2 \setminus T_{\Delta} |_{ \vz = \vb } ,c) \).
Since \( D \setminus T_{\Delta} \) is a pathwise connected set and
since \( F |_{ \vz = \vb } (w) \) is a single valued holomorphic function,
we have a unique holomorphic
function \( \widetilde{G} \in \hol{D \setminus T_{\Delta}} \) which
is an analytic continuation of \( G \) along
all paths in \( D \setminus T_{\Delta} \).
Here \( \Phi ( \vz ,w, \widetilde{G} ( \vz ,w)) \equiv 0 \) holds as
an equality in \( \hol{D \setminus T_{\Delta}} \).
Because \( \Phi \in \hol{D} [X] \), this \( \widetilde{G} \) can
be extended to \( D \) as a continuous function.
Hence, by Rad{\'o}'s theorem, we can recognize \( \widetilde{G}
\in \hol D \).
In consequence, we have \( \widetilde{G} \in \hol D \) satisfying \(
\Phi ( \vz ,w, \widetilde{G} ( \vz ,w)) \equiv 0 \) as
an equality in \( \hol{D} \) and \( \widetilde{G} |_{ \vz = \vb } \equiv
F |_{ \vz= \vb } \in \hol{D_2} \).

Because \( \Phi ( \vz ,w, \widetilde{G} ( \vz ,w)) \equiv 0 \), \(
\Phi \in \hol{D} [X] \) and \( \widetilde{G} \in \hol{D} \),
we know that \( X- \widetilde{G} \) divides \( \Phi \).
Namely, we have \( \Phi ( \vz ,w,X)=(X- \widetilde{G} ) \Psi ( \vz ,w,X) \),
where \( \Psi ( \vz ,w,X) \in \hol{D}[X] \) and \( \deg_X \Psi =t-1 \).
This equality also holds as an equality in \( \hol{ D_1 }[[w]][X]
\) and \( \Phi ( \vz ,w,F( \vz ,w)) \equiv 0 \) holds in this sense.
Therefore, \( F \equiv G \) or \( \Psi ( \vz ,w,F( \vz ,w)) \equiv 0
\) holds as an equality on \( \hol{ D_1 }[[w]] \).
In the former case,
we have \( F \equiv G \in \hol{D} \).
In the later case, by using induction of \( t \),
we have \( F \in \hol{D} \).
\end{proof}

\bigskip

\noindent
\underline{\bf{Step 4.}}

\bigskip

In this step, we finish the proof.
Let \( F \) be a Hartogs series given by {\rm{(\ref{eq:hs})}} and
the conditions {\bf{(C1)}} and {\bf{(C2)}} hold.
By the arguments of {\bf{Step 1}} and {\bf{Step 2}},
we may assume \( t \geqq 2 \) and \( \Phi_0 ( \vz ,w) \equiv 1 \).
Let \( \Delta ( \vz ,w) \in \hol{D} \) be
the discriminant of \( \Phi \in \hol{D} [X] \) and
let \( E_{\Delta} :=
\{ \; \vb \in D_1 \; | \; \Delta |_{ \vz = \vb } \equiv 0 \} \).
We remark that \( E_{\Delta} \) is
a nontrivial analytic subset of \( D_1 \).

First we show that \( F \in \hol{D^{\prime}} \),
where \( D^{\prime} := D \setminus p_1^{-1} ( E_{\Delta} ) \).
Let \( \vb \in D_1 \setminus E_{\Delta} \).
For any \( c \in D_2 \),
we take two simply connected open neighborhoods \(
U_c \) of \( \vb \) in \( D_1 \setminus E_{\Delta} \) and \(
W_c \) of \( c \) in \( D_2 \) as follows:
\begin{itemize}
\item
If \( \Delta ( \vb ,c) \neq 0 \),
we take \( U_c \) and \( W_c \) such that \(
\Delta ( \vz ,w) \) does not vanish in \( U_c \times W_c \).
\item
If \( \Delta ( \vb ,c)=0 \),
we take not only \( U_c \) and \( W_c \) but
also \( P_c \in \hol{ U_c } [w] \) such that
the following four conditions hold.
\begin{itemize}
\item
There exists \( m \in \NN \) such that \( P_c |_{ \vz = \vb } = (w-c)^m \).
Namely, \( P_c \) is a distinguished pseudopolynomial
(Weierstrass polynomial) with its center \( ( \vb ,c) \).
\item
In \( U_c \times W_c \), \( \Delta ( \vz ,w)=0 \) if
and only if \( P_c ( \vz ,w)=0 \).
\item
The discriminant of \( P_c \) is not identically zero in \( U_c \).
\item
For any \( \vg \in U_c \), the number of
the zeros of \( P_c |_{\vz = \vg} \in \CC [w] \) in \( W_c \) is
exactly \( m \), counted with multiplicity.
\end{itemize}
\end{itemize}
Then, for any \( 0<r<R \), we have
\[
 \{ \vb \} \times \overline{D(r)}
 \subset
 \bigcup_{ c \in \overline{D(r)}}
 \left( U_c \times W_c \right)
 .
\]
Because \( \{ \vb \} \times \overline{D(r)} \) is compact,
we can take a simply connected open neighborhood \( U \) of \( \vb
\) in \( D_1 \setminus E_{\Delta} \) and \( \varepsilon >0 \) such that
\begin{align*}
 & \quad \;
 \{ \; ( \vz ,w) \in U \times D(r+ \varepsilon )
 \; | \;
 \Delta ( \vz ,w)=0 \; \}
 \\
 & =
 \{ \; ( \vz ,w) \in U \times D(r+ \varepsilon )
 \; | \;
 P( \vz ,w)=0 \; \}
 ,
\end{align*}
where
\[
 P( \vz ,w) := \prod_{|c| \leqq r , \; \Delta ( \vb ,c)=0}
 P_c ( \vz ,w) \in \hol{U}[w]
 .
\]
Let \( \delta \in \hol{U} \) be the discriminant of \( P \) and
let \( E_{\delta} := \{ \; z \in U \; | \; \delta (z)=0 \; \} \).
We remark that \( E_{\delta} \) is
an analytic set of \( U \) with codimension \( 1 \).
For any \( \vg \in U \setminus E_{\delta} \),
there exists a simply connected open
neighborhood \( U_{\vg} \) of \( \vg \) in \( U \setminus E_{\delta} \).
Here in \( U_{\vg} \times D(r+ \varepsilon ) \), \( P( \vz ,w) \) is
decomposed into a product of linear factors in \( \hol{U_{\vg}}[w] \) and
zeros of each factor does not intersect.
Thus, by {\bf{Lemma \ref{lemm:acnt}}},
we have \( F \in \hol{U_{\vg} \times D(r+ \varepsilon )} \).
Hence we have \( F \in \hol{(U \setminus E_{\delta} )
\times D(r+ \varepsilon )} \).
By {\bf{Lemma \ref{lemm:rmpt}}}, we have \( F \in
\hol{ U \times D(r+ \varepsilon )} \),
therefore we have \( F \in
\hol{( D_1 \setminus E_{\Delta} ) \times D(r+ \varepsilon ) } \) and
consequently we have \( F \in \hol{D^{\prime}} \).

Again by {\bf{Lemma \ref{lemm:rmpt}}}, we have \( F \in \hol{D} \) and
now we complete the proof of {\bf{Theorem} \ref{theo:main}}.

\begin{rema}
In the present note we assume that \( D_1 \) is a domain in \( \CC^N \).
However, {\bf{Theorem} \ref{theo:main}} holds for any
open complex manifold with almost the same proof.
\end{rema}

\section{Example}

In this section, we show that
the condition {\bf{(C2)}} is essentially necessary
in {\bf{Theorem} \ref{theo:main}}.
Namely, we give a Hartogs series \( F \) which
satisfies {\bf{(C1)}} but does not converge
in \( D \setminus S \).
For simplicity, we only treat the case \( N=1 \).

Let \( \HH := \{ \; \tau \in \CC \; | \; \im \tau >0 \; \} \)
be the upper half complex plane.
It is well known that the elliptic theta function
\[
 \theta_{11} (z; \tau ):= \sum_{m \in \ZZ}
 \exp \left( \pi \sqrt{-1} \left(
 \left( m + \tfrac{1}{2} \right)^2 \tau
 +2 \left( m + \tfrac{1}{2} \right) \left( z+ \tfrac{1}{2} \right)
 \right) \right)
 \in \hol{\CC \times \HH}
\]
has the following properties:
\begin{itemize}
\item
\( \theta_{11} (z; \tau )=0 \) if and only if \( z \in \ZZ \tau + \ZZ \).
\item
\( \theta_{11} (z; \tau )=
\exp \left( \pi \sqrt{-1} \left( a^2 \tau +2az \right) \right)
\theta_{11} (z+a \tau +b ; \tau ) \) for any \( a,b \in \ZZ \).
\item
For any \( \tau_0 \in \HH \), \( \theta_{11} |_{\tau = \tau_0}
\) has a zero of order \( 1 \) at the origin \( z=0 \).
\end{itemize}
Now we fix a point \( \tau_0 \in \HH \) and
consider the case
\[
 f_n (z) =
 \left\{
 \begin{array}{ll}
 n^n \theta_{11} ( n! z; \tau_0 ) & (n \geqq 1)
 \\[4pt]
 0 & (n=0)
 \end{array}
 \right.
 .
\]
Then, as for
\[
 F(z,w):= \sum_{n=0}^{\infty} f_n (z) w^n \in \hol{\CC}[[w]]
 ,
\]
the following two properties hold.
\begin{itemize}
\item
\( F|_{z= \alpha } (w) \) converges and is holomorphic on \( \CC \)
if \( \alpha \in \QQ \tau_0 + \QQ \).
\item
\( F|_{z= \alpha } (w) \) does not converge
if \( \alpha \not\in \QQ \tau_0 + \QQ \).
\end{itemize}
The former one is clear, actually, \(
F|_{z= \alpha } (w) \in \hol{\CC}[w] \).
To show the latter one, we need to prepare some lemmas.
For any \( x \in \RR \), we define
\[
 e(x):= \left\{ \begin{array}{ll}
 [x] & ( \; x-[x] \leqq 1/2 \; ) \\[4pt]
 {} [x]+1 & ( \; x-[x]>1/2 \; )
 \end{array} \right.
 \quad \text{and} \quad
 d(x):=x-e(x)
 .
\]
Namely, \( e(x) \) is the nearest integer of \( x \) and \(
d(x) \) is the signed minimal distance from \( x \) to \( \ZZ \).
We remark that \( |d(x)| \leqq 1/2 \).
\begin{lemm}
\label{lemm:the1}
If \( x \in \RR \setminus \QQ \),
there exist infinitely many \( n \)'s
such that \( |d(n!x)| \geqq \frac{1}{2(n+1)} \) holds.
\end{lemm}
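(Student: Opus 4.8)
The plan is to argue by contradiction, exploiting the self-similar behaviour of the function $d$ under passing from $n!\,x$ to $(n+1)!\,x$. Suppose the conclusion fails, so that only finitely many $n$ satisfy $|d(n!\,x)| \geq \frac{1}{2(n+1)}$; equivalently, there is an $n_0$ with
\[
 |d(n!\,x)| < \frac{1}{2(n+1)} \qquad (n \geq n_0).
\]
I would then show that this standing assumption forces $|d(n!\,x)|$ to grow without bound, which is absurd since $|d(\cdot)| \leq 1/2$ always.

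The key step is a multiplicativity relation. Writing $n!\,x = e(n!\,x) + d(n!\,x)$ with $e(n!\,x) \in \ZZ$ and multiplying by $(n+1)$, one obtains
\[
 (n+1)!\,x = (n+1)\,e(n!\,x) + (n+1)\,d(n!\,x).
\]
Here $(n+1)\,e(n!\,x)$ is an integer, while under the standing assumption the remaining term obeys $|(n+1)\,d(n!\,x)| < 1/2$. Hence $(n+1)\,e(n!\,x)$ is precisely the nearest integer to $(n+1)!\,x$, and therefore
\[
 d((n+1)!\,x) = (n+1)\,d(n!\,x) \qquad (n \geq n_0).
\]

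Iterating this identity from $n_0$ then yields, for every $k \geq 0$,
\[
 |d((n_0+k)!\,x)| = \frac{(n_0+k)!}{n_0!}\,|d(n_0!\,x)|.
\]
Because $x \in \RR \setminus \QQ$, the number $n_0!\,x$ is not an integer, so $d(n_0!\,x) \neq 0$. The right-hand side therefore grows factorially and tends to $+\infty$ as $k \to \infty$, contradicting the universal bound $|d(\cdot)| \leq 1/2$. This contradiction establishes the lemma.

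I expect the main obstacle to be the careful verification of the multiplicativity relation $d((n+1)!\,x) = (n+1)\,d(n!\,x)$: one must confirm that multiplication by $(n+1)$ does not change which integer is nearest. The precise threshold $\frac{1}{2(n+1)}$ appearing in the hypothesis is exactly what guarantees that the error term $(n+1)\,d(n!\,x)$ stays below $1/2$ in modulus, so that $e((n+1)!\,x) = (n+1)\,e(n!\,x)$ and the recursion closes. Once this bookkeeping is in place, the factorial blow-up makes the contradiction immediate.
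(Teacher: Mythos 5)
Your proof is correct, but it takes a genuinely different route from the paper's. The paper argues directly: it constructs the mixed-radix (factoradic) expansion \( x = \sum_{k \geqq 1} a_k / k! \) via \( a_1 := e(x) \), \( b_1 := d(x) \), \( a_k := e(k b_{k-1}) \), \( b_k := d(k b_{k-1}) \), notes that \( |b_n| = |d(n!x)| \), invokes irrationality to conclude that infinitely many digits \( a_n \) are nonzero, and at each such digit reads off \( |d((n-1)!x)| = \left| \frac{a_n}{n} + \frac{b_n}{n} \right| \geqq \frac{1}{n} - \frac{1}{2n} = \frac{1}{2n} \). Your argument is essentially the contrapositive packaging of the same arithmetic fact: the identity \( n b_{n-1} = a_n + b_n \) underlying the paper's estimate becomes, in the regime where \( |d(n!x)| < \frac{1}{2(n+1)} \) for all \( n \geqq n_0 \), exactly your recursion \( d((n+1)!x) = (n+1)\,d(n!x) \) (the smallness hypothesis forces the digit \( a_{n+1} \) to vanish), and the resulting factorial blow-up contradicts \( |d(\cdot)| \leqq 1/2 \). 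The tie-breaking worry you flag is indeed harmless: since the hypothesis is a strict inequality, the remainder \( (n+1)\,d(n!x) \) lies in the open interval \( (-1/2,1/2) \), where the nearest integer is unambiguous regardless of the convention in the definition of \( e \). As for what each approach buys: the paper's direct version is more informative, since it locates the good \( n \)'s precisely as those where a factoradic digit of \( x \) is nonzero; yours is leaner, since it never needs to set up the infinite expansion or argue that finitely many nonzero digits would force \( x \) to be rational --- irrationality enters only to guarantee the nonvanishing of the seed \( d(n_0! x) \).
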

\begin{proof}
We define \( \{ a_n \}_{n=1}^{\infty} \) and \(
\{ b_n \}_{n=1}^{\infty} \) by
\[
 a_1 := e(x) , \quad b_1 := d(x) , \quad
 a_k := e(k b_{k-1}) , \quad b_k :=d(k b_{k-1})
 ,
\]
inductively.
Then we have a multiple factoradic representation
\[
 x= \left( \sum_{k=1}^K \frac{a_k}{k!} \right) + \frac{b_K}{K!}
 = \sum_{k=1}^{\infty} \frac{a_k}{k!}
 , \quad
\]
and there exist infinitely many \( n \)'s
such that \( a_n \neq 0 \), since \( x \) is irrational.
If \( a_n \neq 0 \; (n \geqq 2) \), we have
\[
 |d((n-1)!x)| = \left| b_{n-1} \right|
 = \left| \frac{a_n}{n} + \frac{b_n}{n} \right|
 \geqq \left| \frac{a_n}{n} \right| - \left| \frac{b_n}{n} \right|
 \geqq \frac{1}{n} - \frac{1}{2n} = \frac{1}{2n}
 .
\]
\end{proof}
We define \( x,y : \CC \to \RR \) by \( z=x(z) \tau_0 + y(z) \).
Let \( e(z):=e(x(z)) \tau_0 + e(y(z)) \) and \( d(z):=z-e(z) \).
\begin{lemm}
\label{lemm:the2}
For any \( n \in \NN \) and \( z \in \CC \), \(
\left| \theta_{11} (n!z; \tau_0) \right| \geqq
\left| \theta_{11} (d(n!z); \tau_0)) \right| \) holds.
\end{lemm}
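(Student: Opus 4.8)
The plan is to reduce the whole statement to the quasi-periodicity of $\theta_{11}$ (the second displayed property) together with an elementary estimate on the rounding function $d$. Since the asserted inequality involves an arbitrary complex argument, it suffices to prove that $\left| \theta_{11} ( \zeta ; \tau_0 ) \right| \geq \left| \theta_{11} ( d( \zeta ) ; \tau_0 ) \right|$ for every $\zeta \in \CC$ and then to specialize $\zeta = n! z$; the factorial $n!$ and the integer $n$ play no role beyond fixing notation. First I would write $\zeta = x \tau_0 + y$ with $x = x( \zeta )$, $y = y( \zeta ) \in \RR$, and set $a := e(x) \in \ZZ$, $b := e(y) \in \ZZ$, so that $e( \zeta ) = a \tau_0 + b$ and $d( \zeta ) = \zeta - a \tau_0 - b$. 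Applying the quasi-periodicity property with the integers $-a, -b$ in place of $a,b$ then yields
\begin{equation*}
 \theta_{11} ( \zeta ; \tau_0 )
 = \exp \left( \pi \sqrt{-1} \left( a^2 \tau_0 - 2a \zeta \right) \right)
 \, \theta_{11} ( d( \zeta ) ; \tau_0 )
 ,
\end{equation*}
so the claim becomes equivalent to showing that the modulus of the exponential prefactor is at least $1$.

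Next I would take absolute values. Writing $t := \im \tau_0 > 0$ and using that $\zeta = x \tau_0 + y$ with $a, x, y$ real, only the imaginary part of $a^2 \tau_0 - 2a \zeta$ survives, giving
\begin{equation*}
 \left| \exp \left( \pi \sqrt{-1} \left( a^2 \tau_0 - 2a \zeta \right) \right) \right|
 = \exp \left( - \pi t \left( a^2 - 2ax \right) \right)
 .
\end{equation*}
Thus the entire proof collapses to the elementary inequality $a^2 - 2ax \leq 0$, which will make the prefactor $\geq 1$ and finish the argument.

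The only step carrying any content is this last inequality, and here I would use that $a = e(x)$ is the nearest integer to $x$, so $x = a + d(x)$ with $|d(x)| \leq 1/2$. Substituting gives $a^2 - 2ax = -a \left( a + 2 d(x) \right)$, and a one-line case analysis on the sign of $a$ finishes it: if $a \geq 1$ then $a + 2d(x) \geq 0$, if $a \leq -1$ then $a + 2d(x) \leq 0$, and if $a = 0$ the expression vanishes, so in every case $a \left( a + 2 d(x) \right) \geq 0$ and hence $a^2 - 2ax \leq 0$. I do not expect a serious obstacle; the only point demanding care is the bookkeeping of signs when invoking the quasi-periodicity, so that the shift lands exactly on the distinguished representative $d( \zeta )$ (with $|d(x)|, |d(y)| \leq 1/2$) rather than on some other lattice translate, since it is precisely the bound $|d(x)| \leq 1/2$ that forces the prefactor to be $\geq 1$.
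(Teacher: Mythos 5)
Your proposal is correct and follows essentially the same route as the paper: both apply the quasi-periodicity of \( \theta_{11} \) to relate \( \theta_{11}(n!z;\tau_0) \) and \( \theta_{11}(d(n!z);\tau_0) \), observe that upon taking moduli only the \( \tau_0 \)-component \( e(x) \) contributes to the exponent, and conclude from \( |d(x)| \leqq 1/2 \) together with integrality of \( e(x) \) that the exponential prefactor is at least \( 1 \). The only cosmetic differences are the direction in which the quasi-periodicity is applied (you shift by \( -a,-b \); the paper shifts \( d(n!z) \) up to \( n!z \)) and the final elementary step (your sign case analysis on \( a \) versus the paper's bound \( e^2 - |e| \geqq 0 \)), which are equivalent.
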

\begin{proof}
Since \( n!z=d(n!z)+e(n!z) \) and \( e(n!z) \in \ZZ \tau_0 + \ZZ \),
we have
\[
 \theta_{11} (d(n!z); \tau_0 )=
 \exp \left( \pi \sqrt{-1} \left(
 e(x(n!z))^2 \tau_0 +2e(x(n!z))d(n!z) \right) \right)
 \theta_{11} (n!z ; \tau_0 )
\]
and thus we have
\begin{align*}
 & \left| \theta_{11} (n!z ; \tau_0 ) \right|
 \\
 = &
 \exp \left( \pi \left(
 e(x(n!z))^2 \im \tau_0 +2e(x(n!z)) \im d(n!z) \right) \right)
 \left| \theta_{11} (d(n!z); \tau_0 ) \right|
 \\
 = &
 \exp \left( \pi \left(
 e(x(n!z))^2 +2e(x(n!z)) d(n!x(z)) \right) \im \tau_0 \right)
 \left| \theta_{11} (d(n!z); \tau_0 ) \right|
 \\
 \geqq &
 \exp \left( \pi \left(
 e(x(n!z))^2 -|e(x(n!z))| \right) \im \tau_0 \right)
 \left| \theta_{11} (d(n!z); \tau_0 ) \right|
 \\
 \geqq &
 \left| \theta_{11} (d(n!z); \tau_0 ) \right|
 .
\end{align*}
\end{proof}
\begin{lemm}
\label{lemm:the3}
There exists a positive constant \( c>0 \) such
that \( \left| \theta_{11} (z; \tau_0) \right| \geqq c|z| \) holds
for any \( z \in \Lambda := \{ \; z \in \CC \; | \; |x(z)| \leqq 1/2 , \;
|y(z)| \leqq 1/2 \; \} \).
\end{lemm}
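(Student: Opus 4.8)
The plan is to exploit that, on the compact region \( \Lambda \), the function \( \theta_{11}( \cdot ; \tau_0 ) \) has a single, simple zero at the origin, so that dividing by \( z \) produces a nowhere-vanishing holomorphic function whose modulus is bounded below on \( \Lambda \) by compactness.

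First I would check that \( \Lambda \) is compact. Since \( \tau_0 \in \HH \), the pair \( \{ 1, \tau_0 \} \) is an \( \RR \)-basis of \( \CC \), so \( z \mapsto (x(z),y(z)) \) is an \( \RR \)-linear isomorphism \( \CC \to \RR^2 \); the set \( \Lambda \) is the preimage of the closed square \( [-1/2,1/2]^2 \) and hence compact. Next I would locate the zeros of \( \theta_{11}( \cdot ; \tau_0 ) \) lying in \( \Lambda \). By the first listed property these are exactly the points of \( \Lambda \cap ( \ZZ \tau_0 + \ZZ ) \); but a lattice point \( a \tau_0 + b \) with \( a,b \in \ZZ \) lies in \( \Lambda \) only when \( |a| \leqq 1/2 \) and \( |b| \leqq 1/2 \), forcing \( a=b=0 \). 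Thus \( z=0 \) is the unique zero of \( \theta_{11}( \cdot ; \tau_0 ) \) in \( \Lambda \), and by the third listed property it is a zero of order \( 1 \).

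Then I would set
\[
 g(z) := \frac{ \theta_{11}(z; \tau_0 )}{z} \quad (z \neq 0),
 \qquad
 g(0) := \frac{ \partial \theta_{11}}{ \partial z}(0; \tau_0 )
 .
\]
Because \( \theta_{11}( \cdot ; \tau_0 ) \) is entire in \( z \) with a simple zero at the origin, the singularity at \( z=0 \) is removable, \( g \) is holomorphic on \( \CC \), and \( g(0) \neq 0 \). On \( \Lambda \) the function \( g \) is nowhere zero: for \( z \neq 0 \) this holds because \( \theta_{11}(z; \tau_0 ) \neq 0 \) there, and at \( z=0 \) because \( g(0) \neq 0 \). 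Since \( |g| \) is continuous and strictly positive on the compact set \( \Lambda \), it attains a positive minimum \( c := \min_{z \in \Lambda } |g(z)| > 0 \). For every \( z \in \Lambda \) we then obtain \( | \theta_{11}(z; \tau_0 )| = |z| \, |g(z)| \geqq c|z| \), as claimed.

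The only point requiring genuine care is the passage from ``\( z=0 \) is a zero of order \( 1 \)'' to ``\( g \) extends holomorphically across \( 0 \) with \( g(0) \neq 0 \)'', which is precisely the content of the simple-zero hypothesis; everything else is a routine compactness argument, so I expect no serious obstacle here.
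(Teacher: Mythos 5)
Your proposal is correct and follows essentially the same route as the paper: both define \( g(z) := \theta_{11}(z;\tau_0)/z \), use the simple zero at the origin to remove the singularity, and take \( c \) to be the minimum of \( |g| \) on the compact set \( \Lambda \). Your write-up is in fact slightly more careful than the paper's, since you explicitly verify that \( \Lambda \) is compact and that it contains no nonzero lattice points, two facts the paper uses implicitly.
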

\begin{proof}
Let \( g(z):= \theta_{11} (z; \tau_0)/z \).
Obviously, \( g \) is a meromorphic function on \( \CC \) with
a singularity at the origin.
Because \( \theta_{11} |_{\tau = \tau_0}
\) has a zero of order \( 1 \) at the origin,
this singularity is removable.
Hence by setting \( g(0):= \lim_{z \to 0} g(z) \neq 0 \),
we can regard \( g \) as a holomorphic function on \( \CC \) and
the set of all zeros of \( g \) is \( ( \ZZ \tau_0 + \ZZ ) \setminus
\{ 0 \} \). 
Therefore,
by taking \( c:= \min \{ \; |g(z)| \; | \; z \in \Lambda \; \} >0 \),
we have the assertion.
\end{proof}
\begin{prop}
\( F|_{z= z_0} (w) \) does not converge
if \( z_0 \not\in \QQ \tau_0 + \QQ \).
\end{prop}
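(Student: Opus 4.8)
The plan is to prove that the radius of convergence of the power series
\[
 F|_{z=z_0}(w) = \sum_{n=1}^{\infty} n^n \theta_{11}(n!z_0;\tau_0)\, w^n
\]
is zero, so that it diverges for every \( w \neq 0 \). Writing \( a_n := n^n \theta_{11}(n!z_0;\tau_0) \), by the Cauchy--Hadamard formula it suffices to show that \( \limsup_{n\to\infty} |a_n|^{1/n} = +\infty \), equivalently that \( n\,|\theta_{11}(n!z_0;\tau_0)|^{1/n} \) is unbounded. The whole point is therefore to find infinitely many \( n \) for which \( |\theta_{11}(n!z_0;\tau_0)| \) is not too small, and the three preparatory lemmas are tailored exactly for this.

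First I would assemble the lemmas into a single lower bound for \( |a_n| \). Putting \( x_0 := x(z_0) \) and \( y_0 := y(z_0) \), the uniqueness of the decomposition \( z_0 = x_0 \tau_0 + y_0 \) gives \( x(n!z_0)=n!x_0 \) and \( y(n!z_0)=n!y_0 \), hence \( d(n!z_0) = d(n!x_0)\tau_0 + d(n!y_0) \), which lies in \( \Lambda \) because \( |d(n!x_0)|\leqq 1/2 \) and \( |d(n!y_0)|\leqq 1/2 \). Thus {\bf{Lemma \ref{lemm:the2}}} and {\bf{Lemma \ref{lemm:the3}}} combine to give \( |\theta_{11}(n!z_0;\tau_0)| \geqq |\theta_{11}(d(n!z_0);\tau_0)| \geqq c\,|d(n!z_0)| \), and therefore \( |a_n| \geqq c\, n^n\, |d(n!z_0)| \).

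The key step is then a lower bound on \( |d(n!z_0)| \) along a suitable subsequence. Since \( z_0 \notin \QQ\tau_0 + \QQ \), at least one of \( x_0, y_0 \) is irrational, and I would split into two cases. If \( x_0 \notin \QQ \), I pass to the imaginary part: \( \im d(n!z_0) = d(n!x_0)\,\im\tau_0 \), so \( |d(n!z_0)| \geqq |d(n!x_0)|\,\im\tau_0 \), and {\bf{Lemma \ref{lemm:the1}}} applied to \( x_0 \) yields infinitely many \( n \) with \( |d(n!x_0)| \geqq \tfrac{1}{2(n+1)} \), whence \( |d(n!z_0)| \geqq \tfrac{\im\tau_0}{2(n+1)} \). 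If instead \( x_0 \in \QQ \) but \( y_0 \notin \QQ \), then for all sufficiently large \( n \) we have \( n!x_0 \in \ZZ \), so \( d(n!x_0)=0 \) and \( d(n!z_0)=d(n!y_0) \) is real; {\bf{Lemma \ref{lemm:the1}}} applied to \( y_0 \) then produces infinitely many such \( n \) with \( |d(n!z_0)| = |d(n!y_0)| \geqq \tfrac{1}{2(n+1)} \). In either case there is a constant \( C>0 \) and infinitely many \( n \) with \( |d(n!z_0)| \geqq \tfrac{C}{n+1} \).

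For these \( n \) we obtain \( |a_n|^{1/n} \geqq n\,\bigl(\tfrac{cC}{n+1}\bigr)^{1/n} \), and since \( \bigl(\tfrac{cC}{n+1}\bigr)^{1/n} \to 1 \), the right-hand side tends to \( +\infty \). This forces \( \limsup_{n\to\infty}|a_n|^{1/n} = +\infty \), so the series diverges and the proposition follows. The only genuinely delicate point is the case distinction above: one must not try to bound \( |d(n!z_0)| \) by its real part, where the \( \tau_0 \)-component \( d(n!x_0)\re\tau_0 \) and the component \( d(n!y_0) \) could cancel. Using the imaginary part when \( x_0 \) is irrational cleanly avoids this, and reducing to the purely real case when \( x_0 \) is rational disposes of the remaining possibility.
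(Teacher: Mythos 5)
Your proof is correct, and its skeleton is the same as the paper's: combine {\bf{Lemma \ref{lemm:the2}}} and {\bf{Lemma \ref{lemm:the3}}} to get \( |\theta_{11}(n!z_0;\tau_0)| \geqq c\,|d(n!z_0)| \), produce infinitely many \( n \) with \( |d(n!z_0)| \geqq C/(n+1) \) via {\bf{Lemma \ref{lemm:the1}}}, and finish with Cauchy--Hadamard. The only divergence is in how \( |d(n!z_0)| \) is bounded below by the coordinate distances. The paper proves the uniform two-sided estimate \( |z| \geqq M \max \{ |x(z)| , |y(z)| \} \) with \( M=( \im \tau_0 ) \min \{ 1, | \tau_0 |^{-1} \} \), obtained from \( |z| \geqq | \im z | \) together with \( |z|=| \tau_0 || \tau_0^{-1} z| \geqq \frac{ \im \tau_0 }{| \tau_0 |} |y(z)| \); this makes the argument case-free, since {\bf{Lemma \ref{lemm:the1}}} can then be applied to whichever coordinate of \( z_0 \) happens to be irrational. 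You instead use only the first of these two inequalities (the imaginary-part bound, which controls the \( \tau_0 \)-coordinate) and eliminate the need for the second by observing that when \( x_0 \in \QQ \) one has \( n!x_0 \in \ZZ \) for all large \( n \), so that \( d(n!z_0)=d(n!y_0) \) is real; intersecting with the infinitely many \( n \) from {\bf{Lemma \ref{lemm:the1}}} applied to \( y_0 \) still leaves infinitely many, so this case closes as well. Both routes are sound; yours is marginally more elementary (it avoids the \( \tau_0^{-1} z \) trick) at the price of a case distinction, and your closing remark about possible cancellation in the real part correctly identifies why some such device is unavoidable.
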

\begin{proof}
Let \( M:=( \im \tau_0 ) \min \{ 1, | \tau_0 |^{-1} \} \).
Since \( |z| \geqq | \im z |=( \im \tau_0 )|x(z)| \) and
since \( |z|=| \tau_0 || \tau_0^{-1} z| \geqq | \tau_0 || \im ( \tau_0^{-1} z)|
=| \tau_0 || \im \tau_0^{-1} |
|y(z)|= \frac{ \im \tau_0 }{| \tau_0 |} |y(z)| \),
we have \( |z| \geqq M \max \{ |x(z)| , |y(z)| \} \).
We assume \( \alpha \not\in \QQ \tau_0 + \QQ \).
By {\bf{Lemma \ref{lemm:the2}}}, {\bf{Lemma \ref{lemm:the3}}} and
{\bf{Lemma \ref{lemm:the1}}}, there exist infinitely many \( n \)'s
such that \( \left| \theta_{11} (n! \alpha ; \tau_0) \right| \geqq
\frac{cM}{2(n+1)} \) holds.
Hence we know that \( F|_{z= \alpha } (w) \) does not converge by
Cauchy-Hadamard theorem.
\end{proof}

\section*{Acknowledgement}

The authors would like to thank
Professor Kazuko Matsumoto
and
Professor Cris Poor
for their useful advice.
We also thank anonymous referee for a careful reading
of the manuscript and for many helpful comment.
This work is supported
by JSPS KAKENHI Grant Numbers 23K03039 and 23K25765.

\begin{bibdiv}
\begin{biblist}

\bib{Ao1}{article}{
   author={Aoki, Hiroki},
   title={Estimating Siegel modular forms of genus 2 using Jacobi forms},
   journal={J. Math. Kyoto Univ.},
   volume={40},
   date={2000},
   number={3},
   pages={581--588},
   issn={0023-608X},
   review={\MR{1794522}},
   doi={10.1215/kjm/1250517682},
}

\bib{AIP}{article}{
   author={Aoki, Hiroki},
   author={Ibukiyama, Tomoyoshi},
   author={Poor, Cris},
   title={Formal Series of Jacobi Forms},
   status={in preparation},
}

\bib{BR1}{article}{
   author={Bruinier, Jan Hendrik},
   author={Westerholt-Raum, Martin},
   title={Kudla's modularity conjecture and formal Fourier-Jacobi series},
   journal={Forum Math. Pi},
   volume={3},
   date={2015},
   pages={e7, 30},
   review={\MR{3406827}},
   doi={10.1017/fmp.2015.6},
}

\bib{IPY}{article}{
   author={Ibukiyama, Tomoyoshi},
   author={Poor, Cris},
   author={Yuen, David S.},
   title={Jacobi forms that characterize paramodular forms},
   journal={Abh. Math. Semin. Univ. Hambg.},
   volume={83},
   date={2013},
   number={1},
   pages={111--128},
   issn={0025-5858},
   review={\MR{3055825}},
   doi={10.1007/s12188-013-0078-y},
}

\bib{Nis}{book}{
   author={Nishino, Toshio},
   title={Function theory in several complex variables},
   series={Translations of Mathematical Monographs},
   volume={193},
   note={Translated from the 1996 Japanese original by Norman Levenberg and
   Hiroshi Yamaguchi},
   publisher={American Mathematical Society, Providence, RI},
   date={2001},
   pages={xiv+366},
   isbn={0-8218-0816-8},
   review={\MR{1818167}},
   doi={10.1090/mmono/193},
}

\bib{TT1}{article}{
   author={Tuychiev, Takhir},
   author={Tishabaev, Jurabay},
   title={On the continuation of the Hartogs series with holomorphic coeffifficients},
   journal={Bulletin of National University of Uzbekistan: Mathematics and
Natural Sciences},
   volume={2-1},
   date={2019},
   doi={10.56017/2181-1318.1021},
}

\end{biblist}
\end{bibdiv}

\end{document}